\newcommand{\R}{\mathbb{R}}
\newcommand{\test}{C^\infty_0(D)}
\newcommand{\be}{\begin{equation}}
\newcommand{\ee}{\end{equation}}
\newtheorem{theorem}{Theorem}
\newtheorem{remark}[theorem]{{Remark}}
\newtheorem{lemma}[theorem]{Lemma}
\newcommand*{\QEDB}{\hfill\ensuremath{\square}}
\newenvironment{proof}{\noindent {\it Proof.}}{\QEDB\smallskip}
\begin{document}

\newenvironment{example}{\noindent {\bf Example.}}{\hfill{\tiny{$\blacksquare$}}}

\title{A well-posed variational formulation\\ of the Neumann boundary value problem\\ for the biharmonic operator}

\author{Alberto Valli \\ {} \\Department of Mathematics, University of Trento, Italy}

%\date{\today}

%\pacs{}

\maketitle

\begin{abstract}  
In this note we devise and analyze a well-posed variational formulation of the Neumann boundary value problem associated to the biharmonic operator $\Delta^2$. An alternative formulation as a system of two Poisson problems for the Laplace operator $\Delta$ is also derived.
\end{abstract}

\section{Introduction}\label{intro}
Let $D \subset \R^d$, $d \ge 2$, be a bounded, connected, open set, with Lipschitz boundary $\partial D$; the unit outward normal vector on $\partial D$ is indicated by $n$. We denote by $H^s(D)$, $s \ge 0$, the usual Sobolev spaces of Hilbert type (i.e., when the summability exponent $p$ is equal to $2$). 

We want to analyze the Neumann boundary value problem for the biharmonic operator $\Delta^2$, namely, the problem associated to the differential boundary operators $(\Delta u)_{|\partial D}$ and $(\nabla \Delta u \cdot n)_{|\partial D}$. 

We are aware that this problem is not the most interesting from the physical point of view, as the model which describes the equilibrium position of an elastic thin plate, unconstrained on the boundary, involves other second order and third order boundary operators, in which the Poisson ratio also has a role (see Courant and Hilbert \cite[p.\ 250]{couhil} and more recently, e.g., Verchota \cite{ver}, Provenzano \cite{prov}; the original physical model even dates back to Kirchhoff and Kelvin). 

Let us also mention that, adopting another point of view in which the operator $\Delta^2$ is not decomposed as $\Delta \Delta$ but by means of two different second order operators, Pauly and Schomburg \cite{PauSch} (see also Pauly and Zulehner \cite{PauZul}) have  analyzed the biharmonic operator with second order and third order differential boundary conditions which are different from those of the Neumann problem and are more possible to have a physical interpretation.

However, despite these remarks, we think that the Neumann problem for the biharmonic operator has a nice and simple mathematical structure, similar to that of other classical problems, and we find it interesting from the mathematical point of view. Moreover, it is the limiting case, for the Poisson ratio going to $1$, of suitable physical models.  

In the following ``an inductive approach is favored, sometimes at the expense of the conciseness which can be gained by a deductive, authoritarian mode of presentation" (borrowed from Courant and Hilbert \cite[p.\ vii]{couhil2}).

\subsection{A preliminary comparison: the Neumann problems for $-\Delta$ and for $\Delta^2$}
As it is well-known, the Neumann boundary value problem for the Laplace operator $-\Delta$ 
\begin{equation}
\left\{\begin{array}{ll}
- \Delta \varphi = \sigma &\hbox{\rm in} \ D\\ 
(\nabla \varphi \cdot n)_{|\partial D} = \tau &\hbox{\rm on} \ \partial D
\end{array}
\right.
\end{equation}
and what we call the Neumann boundary value problem for the biharmonic operator $\Delta^2$ 
\begin{equation}\label{neubih}
\left\{\begin{array}{ll}
\Delta^2 u = f &\hbox{\rm in} \ D\\ 
(\Delta u)_{|\partial D} = g &\hbox{\rm on} \ \partial D \\
(\nabla \Delta u \cdot n)_{|\partial D} = h &\hbox{\rm on} \ \partial D
\end{array}
\right.
\end{equation}
have similarities and differences.

In particular, for both of them the solution is not unique: adding to $\varphi$ a constant and adding to $u$ a harmonic function gives another solution. Moreover, for both of them the data have to satisfy a compatibility condition:  for the operator $-\Delta$ we have, by integrating by parts, 
$$
\int_D \sigma \, dx = - \int_D \Delta \varphi \, dx = - \int_{\partial D} \nabla \varphi \cdot n \, dS_x =  - \int_{\partial D} \tau \, dS_x \ ,
$$
while for the operator $\Delta^2$ it holds
\begin{equation}\label{compcon}
\begin{array}{ll}
\int_D f \, \eta \, dx = \int_D \Delta^2 u \, \eta \, dx &= \int_{\partial D} \nabla \Delta u \cdot n \, \eta \, dS_x - \int_{\partial D} \Delta u \, \nabla \eta \cdot n \, dS_x \\ [5pt]
&\qquad \qquad + \int_D \Delta u \, \Delta \eta \, dx \\ [5pt]  
&= \int_{\partial D} h \, \eta \, dS_x - \int_{\partial D} g \, \nabla \eta \cdot n \, dS_x \end{array}
\end{equation}
for any $\eta$ such that $\Delta \eta = 0$ in $D$.

A first difference concerns the type of the boundary conditions: the Neumann boundary condition for the $-\Delta$ operator satisfies the so-called complementing condition of Agmon, Douglis and Nirenberg \cite{ADN}, that in the present situation simply says that the polynomial $B_1(t) = t$ is not divisible by $t-i$.

This is not true for the Neumann boundary condition for the $\Delta^2$ operator: the complementing condition would require that the polynomials $B_1(t) = 1+t^2$ and $B_2(t)= t+t^3$ were linearly independent modulo $(t-i)^2$, and it is well-known that this is not the case.
For the ease of the reader, let us readily show this last statement: we have
$$
\frac{B_1(t)}{(t-i)^2} = 1 + \frac{2(1+it)}{(t-i)^2} \ \ , \ \ \frac{B_2(t)}{(t-i)^2} = t + 2i + \frac{2(-t+i)}{(t-i)^2} \, ,
$$
and the second remainder $2(-t+i)$ is proportional to the first remainder $2(1+it)$ by a factor $i$.

Note that the difference between the two Neumann problems still shows up when considering the so-called Lopatinski\u{\i}--\v{S}apiro condition (see Wloka \cite[Sect. 11, Examples 11.2 and 11.8]{Wloka}); in fact, it is known that the complementing condition and the Lopatinski\u{\i}--\v{S}apiro condition are equivalent (see, e.g., Negr\'on-Marrero and Montes-Pizarro \cite[Appendix A]{NegMon}).

Another difference seems to be related to well-posedness: in fact, the Neumann boundary value problem associated to the $-\Delta$ operator is well-posed, in a suitable space where uniqueness is recovered and for data which satisfy the necessary compatibility condition. This well-known result is an easy consequence of the Poincar\'e inequality and the Lax--Milgram theorem in $H^1_*(D)= \{v\in H^1(D) \, | \, \int_D v \, dx = 0\}$ (see, e.g., Girault and Raviart \cite[Sect. 1.4]{GirRav}, Valli \cite[Chap. 5]{AlbV}). 

On the contrary, well-posedness for the Neumann boundary value problem associated to the $\Delta^2$ operator seems to be questionable (see, e.g., what is explicitly reported in Verchota \cite[p. 217 and Sect. 21]{ver}, and in a more indirect way in Renardy and Rogers \cite[Sect. 9.4.2 and Example 9.30]{RenRog}, Gazzola, Grunau and Sweers \cite[Sect. 2.3]{GGS}, Provenzano \cite[p. 1006]{prov}). In addition to this, it can be noted that Begehr \cite{beg} presents a  long list of boundary value problems (twelve!) for the biharmonic operator that are either well-posed or solvable under suitable compatibility conditions, and in that list the Neumann problem is not included. 

Going a little bit more in depth, in Renardy and Rogers \cite{RenRog}, Gazzola, Grunau and Sweers \cite{GGS} and Provenzano \cite{prov} the comments about the fact that the Neumann problem for the biharmonic operator $\Delta^2$ is possibly not well-posed are related to the fact that the complementing condition is not satisfied (in particular, this condition is assumed in the existence and uniqueness Theorems 2.16 and 2.20 in \cite{GGS}; there see also Remark 2.17). This seems to be meaningful, as in Agmon, Douglis and Nirenberg \cite[Sect. 10]{ADN} it is explicitly proved that the complementing condition is necessary for obtaining higher order a-priori estimates in H\"older and $L^p$ spaces (in this respect, see also Lions and Magenes \cite[Chap. 2, Sect. 8.3 and Remark 9.8]{LioMag}). 

However, rather surprisingly, it turns out that this condition is not necessary for well-posedness in suitable Hilbert spaces, as this example shows.

\begin{example}
Consider the operator
$$
\Delta^2 - \Delta + \hbox{\sl Id} \, ,
$$
with the boundary operators
$$
(\Delta u)_{|\partial D} \ \ , \ \ \ (\nabla \Delta u \cdot n)_{|\partial D} - (\nabla u \cdot n)_{|\partial D} \, .
$$
Since the complementing condition only depends on the principal parts of the spatial and boundary operators, we are in the same situation of the Neumann problem for the biharmonic operator; therefore the complementing condition is not satisfied.
However, the weak formulation of the problem (with vanishing boundary data) reads
\be\label{exam}
\begin{array}{ll}
\text{find} \ u \in W \ &: \ \int_D \Delta u \, \Delta v \, dx + \int_D \nabla u \cdot \nabla v \, dx + \int_D u\, v \, dx \\ [5pt]
&\qquad \qquad = \int_D f \, v \, dx \quad \forall \  v \in W \, ,
\end{array}
\ee
where $W=\{v \in H^1(D) \, | \, \Delta v \in L^2(D)\}$, which is a Hilbert space with respect to the scalar product 
$$
(\cdot,\cdot)_W= \int_D \Delta u \, \Delta v \, dx + \int_D \nabla u \cdot \nabla v \, dx + \int_D u\, v \, dx 
$$
(clearly, the Laplace operator being intended in the usual weak sense). Therefore for each $f \in L^2(D)$ the existence and uniqueness of the solution to \eqref{exam} follows directly from the Riesz representation theorem, and the stability estimate $\|u\|_W \le C \|f\|_{L^2(D)}$ also holds.
\end{example}

Being now evident that the complementing condition is not necessary for well-posedness in a suitable Hilbert space, in the next section we go to show that indeed the Neumann boundary value problem for the biharmonic operator is well-posed.

\section{The Neumann boundary value problem for the biharmonic operator is well-posed}
The fact that the Neumann boundary value problem for the biharmonic operator has a solution can be formally shown in a quite simple way (at least for the case $g=0$ and $h=0$).
Set ${\cal H}=\{\eta \in L^2(D) \, | \, \Delta \eta = 0 \ \text{in} \ D\}$, which is a closed subspace of $L^2(D)$. Suppose that $f = \Delta q$ with $q \in H^2_0(D)$; it is easy to verify that under this assumption the necessary condition $f \in {\cal H}^\bot$ 
is satisfied (here orthogonality has to be intended in the $L^2(D)$-sense; to check the result, just integrate by parts twice in $\int_D f \, \eta \, dx=\int_D \Delta q \, \eta \, dx$, where $\eta \in {\cal H}$, and use the boundary conditions $q_{|\partial D} = 0$ and $(\nabla q \cdot n)_{|\partial D} = 0$). Being available this function $q$ one solves the Dirichlet boundary value problem for the biharmonic operator
$$
\left\{\begin{array}{ll}
\Delta^2 w = q &\hbox{\rm in} \ D\\ 
w_{|\partial D} = 0 &\hbox{\rm on} \ \partial D \\
(\nabla w \cdot n)_{|\partial D} = 0 &\hbox{\rm on} \ \partial D 
\end{array}
\right.
$$
and takes $u = \Delta w$. It readily follows  $\Delta^2 u = \Delta(\Delta^2 w) = \Delta q = f$, and $(\Delta u)_{|\partial D} = (\Delta^2 w)_{|\partial D} = q_{|\partial D} = 0$, $(\nabla \Delta u \cdot n)_{|\partial D} = (\nabla \Delta^2 w \cdot n)_{|\partial D} = (\nabla q \cdot n)_{|\partial D} = 0$.

Whether the assumption $f = \Delta q$, $q \in H^2_0(D)$, is equivalent to the necessary condition $f \in {\cal H}^\bot$ will be addressed later (see Theorem \ref{overth}).

\subsection{A variational formulation and its analysis}
We want to devise a suitable variational formulation and analyze it in a careful and complete way. For carrying this approach at its end we will need to prove some results that seem to be interesting by themselves (see Lemma \ref{close}, Lemma \ref{V=X}, Lemma \ref{trace} and Remark \ref{immerH2}).

We assume $f \in L^2(D)$, $g \in L^2(\partial D)$ and $h \in L^2(\partial D)$ (weaker assumptions on $g$ and $h$ could be done: see Theorem \ref{1stexth}). By (formal) integration by parts we trivially have
\begin{equation}\label{intbpbharm}
\begin{array}{ll}
&\int_D f \, v \,dx = \int_D (\Delta^2 u) \, v \, dx = -\int_D \nabla \Delta u \cdot \nabla  v \, dx + \int_{\partial D} \nabla \Delta u \cdot {n} \, v \, dS_x \\ [4pt]
& \qquad  = \int_D \Delta u \, \Delta v \, dx - \int_{\partial D} \Delta u \, \nabla v \cdot n \, dS_x + \int_{\partial D} \nabla \Delta u \cdot {n} \, v \, dS_x \\ [4pt]
& \qquad  = \int_D \Delta u \, \Delta v \, dx - \int_{\partial D} g \, \nabla v \cdot n \, dS_x + \int_{\partial D}h \, v \, dS_x\, .
\end{array}
\end{equation}
Therefore the variational formulation we are looking for is at first identified with the choices:
$$
\begin{array}{ll}
V = L^2(\Delta;D) = \{v \in L^2(D) \, | \, \Delta v \in L^2(D)\} \\ [4pt]
B(w,v) = \int_D \Delta w \, \Delta v \, dx \\ [4pt]{F}(v) =  \int_D f \, v \, dx + \int_{\partial D} g \, \nabla v \cdot {n} \, dS_x - \int_{\partial D} h \,  v  \, dS_x  \, .
\end{array}
$$
We recall that in the definition of $V$ the Laplace operator is intended in the standard weak sense: for $v \in L^1_{\text{loc}}(D)$ we say that a function $q \in L^1_{\text{loc}}(D)$ is the weak Laplacian of $v$  if
$$
\int_D q \, \varphi \ dx = \int_D v \, \Delta \varphi \, dx \quad \forall \ \varphi \in \test \, .
$$ 
Thus it is easily seen that $L^2(\Delta;D)$ is a Hilbert space with respect to the natural scalar product $\int_D(w\,v + \Delta w \, \Delta v)\, dx$.

However, it is clear that with this choice of the variational space the problem cannot be well-posed (uniqueness does not hold: adding to a solution a harmonic function belonging to $L^2(D)$ gives another solution). Therefore the space $V$ should be replaced by a closed subspace of it which does not contain non-zero harmonic functions.  We define
$$
V_\sharp =  L^2(\Delta;D) \cap {\cal H}^\bot  \, .
$$

Note also that the definition of the linear operator $F(\cdot)$ is not completely clear: having only assumed $g\in L^2(\partial D)$ and $h\in L^2(\partial D)$, the boundary integrals would require $v_{|\partial D} \in L^2(\partial D)$ and $(\nabla v \cdot {n})_{|\partial D} \in L^2(\partial D)$, and for a function $v$ belonging to $L^2(\Delta;D)$ this is not always the case. A suitable trace theory for $v_{|\partial D}$ and $(\nabla v \cdot {n})_{|\partial D}$ is needed (see Lemma \ref{trace}): but for the moment leave that apart and proceed in a formal way.

In order to use the Lax--Milgram theorem we need a Poincar\' e-type inequality like
$$
\|v\|_{L^2(D)} \le C \|\Delta v\|_{L^2(D)}
$$
for all $v \in V_\sharp$. This inequality would follow, by a standard ``reductio ad absurdum", if the immersion $L^2(\Delta;D) \hookrightarrow L^2(D)$ was compact; but this result is not true (see, e.g., Valli \cite[Exercise 6.11]{AlbV}). Then we could try to show that the immersion of $V_\sharp \hookrightarrow L^2(D)$ is compact. This result is more elusive, and for the moment we do not insist on it (but see Remark \ref{immerH2}).

A different attempt can be done by changing the variational space. We introduce
$$
X =\{\omega = \Delta r \, | \, r \in H^4(D) \cap H^2_0(D)\} \, .
$$ 
We verify at once that $X \subset H^2(D)$;
moreover, we have 
\begin{lemma}\label{close}
Assume that the boundary $\partial D$ is smooth, say, of class $C^4$. Then $X$ is closed in $H^2(D)$ with respect to the $H^2(D)$-norm (thus $X$ is a Hilbert space with the $H^2(D)$-scalar product).
\end{lemma}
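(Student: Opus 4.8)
The plan is to realise $X$ as the image of $L^2(D)$ under a bounded linear operator, by factoring the map $r \mapsto \Delta r$ through the solution operator of the Dirichlet problem for the biharmonic operator, and then to run a closed-graph type argument based on the continuity of the Laplacian between Sobolev scales.

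First I would recall that, for each $q \in L^2(D)$, the problem of finding $w \in H^2_0(D)$ such that $\int_D \Delta w \, \Delta v \, dx = \int_D q\, v \, dx$ for all $v \in H^2_0(D)$ has a unique solution: coercivity of the bilinear form on $H^2_0(D)$ follows from the Poincar\'e-type inequality $\|v\|_{H^2(D)} \le C \|\Delta v\|_{L^2(D)}$ on $H^2_0(D)$, and Lax--Milgram applies. Write $w = Sq$, so that $\Delta^2(Sq) = q$ in $D$ and $Sq \in H^2_0(D)$. Then I would invoke the elliptic regularity theory for the biharmonic operator under the standing hypothesis that $\partial D$ is of class $C^4$: the solution satisfies $w \in H^4(D)$ together with the a priori estimate $\|w\|_{H^4(D)} \le C \|q\|_{L^2(D)}$. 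Hence $S : L^2(D) \to H^4(D) \cap H^2_0(D)$ is a bounded linear operator.

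Next I would take an arbitrary sequence $\omega_n = \Delta r_n \in X$, with $r_n \in H^4(D) \cap H^2_0(D)$, converging to some $\omega$ in the $H^2(D)$-norm, and show $\omega \in X$. Applying $\Delta$ and using the continuity of $\Delta : H^2(D) \to L^2(D)$, the functions $q_n := \Delta \omega_n = \Delta^2 r_n$ converge in $L^2(D)$ to $q := \Delta \omega$. Since $r_n \in H^2_0(D)$ and $\Delta^2 r_n = q_n$ (integrating by parts twice, the boundary terms vanish because both $v$ and $\nabla v \cdot n$ vanish on $\partial D$ for $v \in H^2_0(D)$), uniqueness in the biharmonic Dirichlet problem gives $r_n = S q_n$; hence, by boundedness of $S$, we have $r_n \to r := Sq$ in $H^4(D)$, and in particular $r \in H^4(D) \cap H^2_0(D)$. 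Finally, the continuity of $\Delta : H^4(D) \to H^2(D)$ yields $\omega_n = \Delta r_n \to \Delta r$ in $H^2(D)$, and by uniqueness of the limit $\omega = \Delta r \in X$. This proves that $X$ is closed in $H^2(D)$, and the last assertion is then immediate, a closed subspace of the Hilbert space $H^2(D)$ being itself a Hilbert space for the induced scalar product.

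I expect the only real point to be the correct invocation of the global $H^4$ a priori estimate for the biharmonic Dirichlet problem; this is exactly where the $C^4$ smoothness of $\partial D$ enters (mere Lipschitz regularity would not suffice), and one should quote a precise reference for the regularity up to the boundary, e.g.\ Gazzola, Grunau and Sweers \cite{GGS} or Lions and Magenes \cite{LioMag}. The remainder of the argument is routine and uses only the boundedness of $S$ and the continuity of the Laplacian between the relevant Sobolev spaces.
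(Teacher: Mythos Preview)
Your argument is correct and is essentially the same as the paper's: both construct the limit $r$ as the $H^2_0$-solution of $\Delta^2 r = \Delta\omega$, invoke the $H^4$ regularity estimate (this is where $\partial D\in C^4$ enters, and you correctly cite \cite{GGS}), and then use continuity of $\Delta:H^4(D)\to H^2(D)$ to pass to the limit. The only cosmetic difference is that you package the regularity step as boundedness of a solution operator $S$ and explicitly check $r_n=Sq_n$ via uniqueness, whereas the paper writes the a priori estimate $\|r_k-r\|_{H^4(D)}\le C\|\Delta\omega_k-\Delta\omega\|_{L^2(D)}$ directly.
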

\begin{proof} 
In fact, if $\omega_k = \Delta r_k \to \omega$ in $H^2(D)$, we take the solution $r \in H^2_0(D)$ of the biharmonic problem $\Delta^2 r = \Delta \omega \in L^2(D)$ with the homogeneous Dirichlet boundary conditions, namely, $r_{|\partial D} = 0$ and $(\nabla r \cdot n)_{|\partial D} = 0$ (the existence and uniqueness of this solution can be found in Gazzola, Grunau and Sweers \cite[Theor. 2.15]{GGS}). 
From the regularity results for higher order elliptic equations 
we obtain
$$
\|r_k - r\|_{H^4(D)} \le C \|\Delta \omega_k - \Delta \omega\|_{L^2(D)}
$$
(see Gazzola, Grunau and Sweers \cite[Corollary 2.21]{GGS}). Since $\omega_k \to \omega$ in $H^2(D)$, it follows $r_k \to r$ in $H^4(D)$, and consequently $\Delta r_k = \omega_k \to \Delta r$ in $H^2(D)$, thus $\Delta r = \omega$ and $X$ is closed.
\end{proof}

On the other hand, the estimate above also says that for each $\omega \in X$ it holds
$$
\|\omega\|_{H^2(D)} = \|\Delta r\|_{H^2(D)} \le C \|r\|_{H^4(D)} \le C \|\Delta \omega\|_{L^2(D)} \, ,
$$
therefore the bilinear form $B(\upsilon,\omega) = \int_D \Delta \upsilon \, \Delta \omega \, dx$ is coercive in $X$. Note also that for $\omega \in X \subset H^2(D)$ the trace values $\omega_{|\partial D}$ and $(\nabla \omega \cdot n)_{|\partial D}$ have a meaning in $H^{3/2}(\partial D)$ and $H^{1/2}(\partial D)$, respectively. Summarizing, by using the Lax--Milgram theorem we have proved:

\begin{theorem}\label{1stexth}
Assume that $D \subset \mathbb{R}^{n}$ is a bounded, connected, open set, with  boundary $\partial D \in C^4$, and that $f \in L^2(D)$, $g \in H^{-1/2}(\partial D)$ and $h \in H^{-3/2} (\partial D)$. Then there exists a unique solution of the variational problem
\begin{equation}\label{weekeqbih}
\begin{array}{ll}
&\text{find } u \in X \, \\ [4pt]
&\qquad \int_D \Delta u \, \Delta \omega \, dx = \int_D f \, \omega \, dx  + \langle g,\nabla \omega \cdot {n}\rangle_{-\frac12}- \langle h,\omega\rangle_{-\frac32} \\ [4pt]
&\text{for all} \ \omega \in X \, .
\end{array}
\end{equation}
\end{theorem}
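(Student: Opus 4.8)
The plan is to obtain the result as a direct application of the Lax--Milgram theorem on the Hilbert space $X$ endowed with the $H^2(D)$-scalar product; since the bilinear form $B(\upsilon,\omega)=\int_D \Delta\upsilon\,\Delta\omega\,dx$ is symmetric, the Riesz representation theorem would in fact already suffice. The completeness of $X$ in the $H^2(D)$-norm is exactly the content of Lemma~\ref{close}, so what remains is to verify the continuity and the coercivity of $B$ on $X$ and the continuity of the right-hand side functional $F(\omega)=\int_D f\,\omega\,dx+\langle g,\nabla\omega\cdot n\rangle_{-\frac12}-\langle h,\omega\rangle_{-\frac32}$.

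Continuity of $B$ is immediate from Cauchy--Schwarz: $|B(\upsilon,\omega)|\le \|\Delta\upsilon\|_{L^2(D)}\|\Delta\omega\|_{L^2(D)}\le \|\upsilon\|_{H^2(D)}\|\omega\|_{H^2(D)}$. Coercivity has, in essence, already been recorded just before the statement: the regularity estimate quoted from \cite{GGS}, together with the definition of $X$, gives $\|\omega\|_{H^2(D)}\le C\|\Delta\omega\|_{L^2(D)}$ for every $\omega\in X$, whence $B(\omega,\omega)=\|\Delta\omega\|_{L^2(D)}^2\ge C^{-2}\|\omega\|_{H^2(D)}^2$.

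The only point that deserves a few words is the continuity of $F$, since this is where the smoothness of $\partial D$ and the trace theory enter. For $\omega\in X\subset H^2(D)$ the trace theorem on the $C^4$ domain $D$ yields $\omega_{|\partial D}\in H^{3/2}(\partial D)$ and $(\nabla\omega\cdot n)_{|\partial D}\in H^{1/2}(\partial D)$, with $\|\omega_{|\partial D}\|_{H^{3/2}(\partial D)}+\|(\nabla\omega\cdot n)_{|\partial D}\|_{H^{1/2}(\partial D)}\le C\|\omega\|_{H^2(D)}$; the brackets $\langle\cdot,\cdot\rangle_{-\frac12}$ and $\langle\cdot,\cdot\rangle_{-\frac32}$ are then understood as the duality pairings between $H^{-1/2}(\partial D)$ and $H^{1/2}(\partial D)$ and between $H^{-3/2}(\partial D)$ and $H^{3/2}(\partial D)$, respectively, and they reduce to the surface integrals $\int_{\partial D} g\,\nabla\omega\cdot n\,dS_x$ and $\int_{\partial D} h\,\omega\,dS_x$ when $g,h\in L^2(\partial D)$. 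Combining this with Cauchy--Schwarz for the volume term gives $|F(\omega)|\le\big(\|f\|_{L^2(D)}+C\|g\|_{H^{-1/2}(\partial D)}+C\|h\|_{H^{-3/2}(\partial D)}\big)\|\omega\|_{H^2(D)}$, so $F$ is a bounded linear functional on $X$.

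With these ingredients the Lax--Milgram theorem delivers a unique $u\in X$ satisfying \eqref{weekeqbih}, together with the stability estimate $\|u\|_{H^2(D)}\le C\big(\|f\|_{L^2(D)}+\|g\|_{H^{-1/2}(\partial D)}+\|h\|_{H^{-3/2}(\partial D)}\big)$. I do not expect a genuine obstacle here: the two substantive facts --- completeness of $X$ in $H^2(D)$ and the Poincar\'e-type bound $\|\omega\|_{H^2(D)}\le C\|\Delta\omega\|_{L^2(D)}$ on $X$ --- have already been obtained (Lemma~\ref{close} and the estimate preceding the statement, both resting on the elliptic regularity results of \cite{GGS}), and the only mildly delicate step is keeping the boundary terms in $F$ rigorous, which the $H^2$-trace theorem on the $C^4$ domain handles.
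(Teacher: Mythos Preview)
Your proposal is correct and follows essentially the same route as the paper: Lemma~\ref{close} gives that $X$ is a Hilbert space in the $H^2(D)$-norm, the elliptic regularity estimate yields the coercivity bound $\|\omega\|_{H^2(D)}\le C\|\Delta\omega\|_{L^2(D)}$ on $X$, the $H^2$-trace theorem makes $F$ bounded, and Lax--Milgram concludes. You have written out a few more details (continuity of $B$, the explicit stability estimate) than the paper does, but the argument is the same.
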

Here $\langle \cdot,\cdot\rangle_{-s}$ denotes the duality pairing between $H^{-s}(\partial D) = (H^s(\partial D))'$ and $H^{s}(\partial D)$, $s > 0$.
Let us also remark that for $\omega = \Delta r \in X$
we have
$$
\int_D \omega \eta \, dx = \int_D \Delta r \, \eta \, dx = \int_D r \, \Delta \eta \, dx = 0
$$ 
for each $\eta \in {\cal H}$, the integration by parts being justified by a density argument as $r \in H^2_0(D)$. Therefore $X \subset V_\sharp$.

\subsection{Interpretation of the result}\label{interpr}
We have thus proved the existence and uniqueness of a weak solution $u \in X \subset V_\sharp$ of a variational problem related to the same bilinear form and the same linear functional which describe the Neumann boundary value problem for the biharmonic operator. Moreover, the variational space $X$ does not contain any explicit constraint on the values on the boundary. 

However, the proof that the solution we have found is the solution of the Neumann boundary value problem for the biharmonic operator needs some work.

We start trying to determine which equation satisfies $u$ when we use test functions belonging to $H^2(D)$ (instead of $X$; note that $\test$ is contained in $H^2(D)$, while it is not contained in $X$). We can use the $L^2(D)$-orthogonal projection $P_{\cal H}$ on ${\cal H}$.  Take $q \in H^2(D)$ and set $\pi=q - P_{\cal H}q$: clearly, $\Delta \pi = \Delta q$, thus
$
\int_D \Delta u \Delta q \, dx = \int_D \Delta u \Delta \pi \, dx$.
Moreover $\pi \in {\cal H}^\bot$, thus
$\pi \in V_\sharp$. 

It is now useful the following lemma:
\begin{lemma}\label{V=X}
Assume that $D \subset \mathbb{R}^{n}$ is a bounded, connected, open set, with  $\partial D \in C^4$. Then $X = V_\sharp$, and the norms $\|\cdot\|_X$ and $\|\cdot\|_{V_\sharp}$ are equivalent.
\end{lemma}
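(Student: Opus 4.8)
The plan is to prove the two set inclusions separately and then compare the two norms. The inclusion $X \subseteq V_\sharp$ is already at hand: it was checked just before the statement that $\Delta r \in {\cal H}^\bot$ whenever $r \in H^2_0(D)$, while $X \subset H^2(D) \subset L^2(\Delta;D)$ holds trivially. So the substance of the lemma is the reverse inclusion $V_\sharp \subseteq X$, which I would establish by an explicit lifting of an element of $V_\sharp$ to a biharmonic potential.

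Concretely, given $v \in V_\sharp$ --- so $v \in L^2(D)$, $\Delta v \in L^2(D)$ and $v \in {\cal H}^\bot$ --- I would first invoke \cite[Theor. 2.15]{GGS} to produce the unique $r \in H^2_0(D)$ solving $\Delta^2 r = \Delta v$ in $D$ with homogeneous Dirichlet boundary conditions; here $\Delta v \in L^2(D)$ is an admissible right-hand side. Since $\partial D \in C^4$, the elliptic regularity estimate of \cite[Corollary 2.21]{GGS} upgrades this to $r \in H^4(D)$ together with $\|r\|_{H^4(D)} \le C\|\Delta v\|_{L^2(D)}$, so that $\omega := \Delta r$ belongs to $X$ by definition. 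To see that $\omega$ coincides with $v$, I would set $z := v - \omega \in L^2(D)$ and compute $\Delta z = \Delta v - \Delta^2 r = 0$, whence $z \in {\cal H}$; but also $v \in {\cal H}^\bot$ by hypothesis and $\omega \in X \subset {\cal H}^\bot$, so $z \in {\cal H}^\bot$, forcing $z \in {\cal H}\cap{\cal H}^\bot = \{0\}$. Thus $v = \Delta r \in X$, which yields $X = V_\sharp$.

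For the equivalence of the norms $\|\cdot\|_X = \|\cdot\|_{H^2(D)}$ (by Lemma \ref{close}) and $\|\cdot\|_{V_\sharp} = \|\cdot\|_{L^2(\Delta;D)}$, one direction is trivial, since $\|v\|_{L^2(D)}^2 + \|\Delta v\|_{L^2(D)}^2 \le C\|v\|_{H^2(D)}^2$ for every $v \in H^2(D)$, hence $\|v\|_{V_\sharp} \le C\|v\|_X$ on $X = V_\sharp$. The other direction is read off the lifting above,
$$
\|v\|_X = \|\Delta r\|_{H^2(D)} \le C\|r\|_{H^4(D)} \le C\|\Delta v\|_{L^2(D)} \le C\|v\|_{V_\sharp} \, ,
$$
or, equivalently, from the Poincar\'e-type estimate $\|\omega\|_{H^2(D)} \le C\|\Delta\omega\|_{L^2(D)}$ on $X$ already recorded after Lemma \ref{close}.

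I do not anticipate a genuine obstacle, since the two cited results of Gazzola--Grunau--Sweers carry the analytic weight (and it is there that the smoothness assumption $\partial D \in C^4$ is consumed). The only place where the defining constraint of $V_\sharp$, as opposed to merely $L^2(\Delta;D)$, is essential is the step $z \in {\cal H}\cap{\cal H}^\bot = \{0\}$: orthogonality to ${\cal H}$ is exactly what removes the harmonic ambiguity in the lifting and pins $v$ down to the particular potential $\Delta r$.
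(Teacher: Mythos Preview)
Your proof is correct and follows essentially the same route as the paper: solve the homogeneous Dirichlet biharmonic problem $\Delta^2 r = \Delta v$, use the $C^4$-regularity estimate to place $r$ in $H^4(D)\cap H^2_0(D)$, and then eliminate the harmonic discrepancy $v-\Delta r$ via ${\cal H}\cap{\cal H}^\bot=\{0\}$. The norm comparison is also handled identically, reading off the Poincar\'e-type bound from the $H^4$-estimate on $r$.
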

\begin{proof}
Here above we have already seen that $X \subset V_\sharp$. Let us prove the opposite inclusion. Take $\omega \in V_\sharp$ and solve $\Delta^2 r = \Delta \omega \in L^2(D)$ with $r_{|\partial D} =0$ and $(\nabla r \cdot n)_{|\partial D} =0$ (namely, find a solution $r \in H^2_0(D)$). We have already seen that this is possible and that, by the regularity results for the biharmonic operator and provided that $\partial D \in C^4$, we obtain a unique solution $r \in H^4(D) \cap H^2_0(D)$ with the estimate $\|r\|_{H^4(D)} \le C \|\Delta \omega\|_{L^2(D)}$. Thus we have $(\Delta r - \omega) \in {\cal H}$. Moreover, for each $\eta \in {\cal  H}$,
$$
\int_D \Delta r \, \eta \, dx = \int_D r \, \Delta \eta \, dx = 0 \, ,
$$
due to the boundary conditions $r_{|\partial D} =0$ and $(\nabla r \cdot n)_{|\partial D} =0$; hence $\Delta r \in {\cal H}^\bot$ and also $(\Delta r - \omega)\in {\cal H}^\bot$. Having already seen $(\Delta r - \omega) \in {\cal H}$, we obtain $\omega = \Delta r$, $r \in H^4(D) \cap H^2_0(D)$, therefore $\omega \in X$.

Finally, we have
$$
\|\omega\|_{X} = \|\omega\|_{H^2(D)} =  \|\Delta r\|_{H^2(D)} \le C \|r\|_{H^4(D)} \le C \|\Delta \omega\|_{L^2(D)} \le C \|\omega\|_{V_\sharp} \, ,
$$
and also
$$
\|\omega\|^2_{V_\sharp} = \|\omega\|^2_{L^2(D)} + \|\Delta \omega\|^2_{L^2(D)} \le C \|\omega\|^2_{H^2(D)} \, .
$$
which ends the proof.
\end{proof}

Thus we can proceed with the interpretation of the weak problem \eqref{weekeqbih}, as now we know that $\pi=(q - P_{\cal H}q) \in X= V_\sharp$. Note also that $P_{\cal H}q \in H^2(D)$, as $q \in H^2(D)$ and $\pi \in X \subset H^2(D)$, thus $(P_{\cal H}q)_{|\partial D} \in H^{3/2}(\partial D)$ and $(\nabla P_{\cal H}q \cdot n)_{|\partial D} \in H^{1/2}(\partial D)$.
By inserting $\pi$ in the variational problem \eqref{weekeqbih} we easily find
$$
\begin{array}{ll}
\int_D \Delta u \Delta q \, dx &= \int_D \Delta u \Delta \pi \, dx = \int_D f \pi \, dx + \langle g,\nabla \pi \cdot {n}\rangle_{-\frac12} - \langle h,\pi\rangle_{-\frac32} \\[4pt]
&= \int_D f  q \, dx + \langle g,\nabla q \cdot {n}\rangle_{-\frac12} -\langle h,q\rangle_{-\frac32} \\ [4pt]
&\qquad \quad-  \int_D f  \, P_{\cal H}q \, dx - \langle g,\nabla P_{\cal H}q \cdot {n}\rangle_{-\frac12} +\langle h,P_{\cal H}q\rangle_{-\frac32} \, . 
\end{array}
$$
Since a necessary condition for determining a solution of the Neumann problem is 
\begin{equation}\label{neccon}
\int_D f  \, \eta \, dx + \langle g,\nabla \eta \cdot {n}\rangle_{-\frac12} - \langle h,\eta\rangle_{-\frac32} = 0
\end{equation}
for each $\eta \in {\cal H} \cap H^2(D)$ (see \eqref{compcon} and take into account that now we are only assuming $g \in H^{-1/2}(\partial D)$ and $h \in H^{-3/2} (\partial D)$), we conclude that the solution $u \in X$ we have found solves \eqref{weekeqbih} also for each $q \in H^2(D)$.

Before going on, let us clarify the meaning of the traces $v_{|\partial D}$ and $(\nabla v \cdot n)_{|\partial D}$ for $v \in L^2(\Delta;D)$.

\begin{lemma}\label{trace}
Assume that $D \subset \mathbb{R}^{n}$ is a bounded, connected, open set, with  boundary $\partial D \in C^2$. Then for any $v \in L^2(\Delta;D)$ one has $v_{|\partial D} \in H^{-1/2}(\partial D)$, $(\nabla v \cdot n)_{|\partial D} \in H^{-3/2}(\partial D)$. Moreover, the maps $v \to v_{|\partial D}$ and $v \mapsto (\nabla v \cdot n)_{|\partial D}$ are continuous from $ L^2(\Delta;D)$ to  $H^{-1/2}(\partial D)$ and $H^{-3/2}(\partial D)$, respectively.
\end{lemma}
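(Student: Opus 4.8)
The plan is to define the two traces by duality from Green's second identity and then to check that the resulting functionals are bounded. For $v,\varphi\in C^\infty(\overline D)$ one has
$$
\int_{\partial D}(\nabla v\cdot n)\,\varphi\,dS_x-\int_{\partial D}v\,(\nabla\varphi\cdot n)\,dS_x=\int_D\big(\varphi\,\Delta v-v\,\Delta\varphi\big)\,dx\,,
$$
and by density of $C^\infty(\overline D)$ in $H^2(D)$ this stays valid for $v,\varphi\in H^2(D)$. The key observation is that the right-hand side makes sense for \emph{every} $v\in L^2(\Delta;D)$ and every $\varphi\in H^2(D)$, and that it depends on $\varphi$ only through its two boundary traces; this is what I would turn into a definition.

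First I would invoke the $H^2$-trace theorem: since $\partial D\in C^2$, the map $\varphi\mapsto(\varphi_{|\partial D},(\nabla\varphi\cdot n)_{|\partial D})$ is continuous from $H^2(D)$ onto $H^{3/2}(\partial D)\times H^{1/2}(\partial D)$ and admits a bounded linear right inverse. Hence, given $\lambda\in H^{1/2}(\partial D)$ there is $\varphi_\lambda\in H^2(D)$ with $\varphi_{\lambda|\partial D}=0$, $(\nabla\varphi_\lambda\cdot n)_{|\partial D}=-\lambda$ and $\|\varphi_\lambda\|_{H^2(D)}\le C\|\lambda\|_{H^{1/2}(\partial D)}$; and given $\mu\in H^{3/2}(\partial D)$ there is $\varphi_\mu\in H^2(D)$ with $\varphi_{\mu|\partial D}=\mu$, $(\nabla\varphi_\mu\cdot n)_{|\partial D}=0$ and $\|\varphi_\mu\|_{H^2(D)}\le C\|\mu\|_{H^{3/2}(\partial D)}$. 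Then, for $v\in L^2(\Delta;D)$, I would set
$$
\langle v_{|\partial D},\lambda\rangle_{-\frac12}:=\int_D\big(\varphi_\lambda\,\Delta v-v\,\Delta\varphi_\lambda\big)\,dx\,,\qquad\langle(\nabla v\cdot n)_{|\partial D},\mu\rangle_{-\frac32}:=\int_D\big(\varphi_\mu\,\Delta v-v\,\Delta\varphi_\mu\big)\,dx\,.
$$

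Two points then have to be verified. (i) \emph{Well-posedness of the definition}: if $\widetilde\varphi$ is a second admissible lifting of the same datum, then $\psi=\varphi-\widetilde\varphi\in H^2_0(D)$; by the very definition of the weak Laplacian, $\int_D(\Delta v)\,\psi\,dx=\int_D v\,\Delta\psi\,dx$ for $\psi\in\test$, and both sides extend continuously to $\psi\in H^2_0(D)=\overline{\test}^{\,H^2(D)}$, so $\int_D(\psi\,\Delta v-v\,\Delta\psi)\,dx=0$ and the integral is unchanged; thus the two functionals depend only on $\lambda$, resp. $\mu$. (ii) \emph{Continuity}: by the Cauchy--Schwarz inequality,
$$
\Big|\int_D\big(\varphi\,\Delta v-v\,\Delta\varphi\big)\,dx\Big|\le\big(\|\Delta v\|_{L^2(D)}+\|v\|_{L^2(D)}\big)\,\|\varphi\|_{H^2(D)}\,,
$$
so, using $\|\varphi_\lambda\|_{H^2(D)}\le C\|\lambda\|_{H^{1/2}(\partial D)}$ and $\|\varphi_\mu\|_{H^2(D)}\le C\|\mu\|_{H^{3/2}(\partial D)}$, one obtains $v_{|\partial D}\in H^{-1/2}(\partial D)$ and $(\nabla v\cdot n)_{|\partial D}\in H^{-3/2}(\partial D)$ with the asserted continuity bounds in terms of $\|v\|_{L^2(\Delta;D)}$. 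Finally, comparing with Green's identity for $v\in H^2(D)$ shows that these functionals coincide with the usual traces, so the notation is consistent (alternatively one may invoke the density of $H^2(D)$, indeed of $C^\infty(\overline D)$, in $L^2(\Delta;D)$).

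I do not expect a genuine obstacle: the argument is a standard duality/density routine. The two places calling for care are the use of a bounded right inverse of the $H^2$-trace map — this is exactly where the hypothesis $\partial D\in C^2$ is used — and the well-definedness step, which rests on the fact that the identity $\int_D(\Delta v)\,\psi\,dx=\int_D v\,\Delta\psi\,dx$, immediate for $\psi\in\test$ from the definition of the weak Laplacian, survives the passage to $\psi\in H^2_0(D)$ because both sides are continuous for the $H^2(D)$-topology.
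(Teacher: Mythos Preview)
Your argument is correct and follows essentially the same route as the paper: define the two traces by duality via Green's second identity, using $H^2(D)$-liftings of boundary data with one trace set to zero, and check well-definedness by observing that the integral vanishes for $\psi\in H^2_0(D)$. The only cosmetic difference is that the paper names a concrete lifting (the solution of a biharmonic Dirichlet problem) where you invoke the abstract bounded right inverse of the $H^2$-trace map; your write-up is in fact somewhat more detailed on the continuity estimate and the consistency with the classical traces.
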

\begin{proof}
This result is obtained by standard arguments, and we present the proof for the ease of the reader (see also Lions and Magenes \cite[Chap. 2, Theor. 6.5 and Sect. 9.8]{LioMag}). We define $v_{|\partial D} \in H^{-1/2}(\partial D)$ by setting
$$
\langle v_{|\partial D},\xi\rangle_{-\frac12} :=  \int_D v \, \Delta E_\xi \, dx- \int_D \Delta v\,  E_\xi \, dx \ \  \ \forall \ \xi \in H^{1/2}(\partial D) \, ,
$$
where $E_\xi \in H^2(D)$ is any extension in $D$ of $\xi$ such that $(\nabla E_\xi \cdot n)_{|\partial D} = \xi$ on $\partial D$, $E_{\xi|\partial D} = 0$  on $\partial D$ and the map $\xi \mapsto E_\xi$ is continuous from $H^{1/2}(\partial D)$ to $H^2(D)$ (for instance, the solution $w_\xi$ of the biharmonic problem $\Delta^2 w_\xi = 0$ with $w_{\xi|\partial D} = 0$ and $(\nabla w_\xi \cdot n)_{|\partial D} = \xi$). The definition does not depend on the choice of the extension; in fact, if we take two extensions $E^{(1)}_\xi$ and $E^{(2)}_\xi$, their difference $\sigma_\xi$ belongs to $H^2_0(D)$ and therefore $\int_D v \, \Delta \sigma_\xi \, dx  = \int_D \Delta v \, \sigma_\xi \, dx$ for each $v \in L^2(\Delta;D)$. Note also that the map $v \mapsto v_{|\partial D}$ is clearly continuous from $L^2(\Delta;D)$ to $H^{-1/2}(\partial D)$.

With a similar argument,  we define the first order trace $(\nabla v \cdot n)_{|\partial D}  \in H^{-3/2}(\partial D)$  as
$$
\langle (\nabla v \cdot n)_{|\partial D},\mu \rangle_{-\frac32} := \int_D \Delta v\,  F_\mu \, dx - \int_D v \, \Delta F_\mu \, dx \ \ \ \forall \ \mu \in H^{3/2}(\partial D) \, ,
$$
where $F_\mu \in H^2(D)$ is any (continuous) extension in $D$ of $\mu$ such that $F_{\mu|\partial D} = \mu$ and $(\nabla F_\mu \cdot n)_{|\partial D} = 0$ on $\partial D$. 
\end{proof}

Now let us continue the interpretation of the weak problem \eqref{weekeqbih}. We know that we can choose any test function $q \in H^2(D)$.
Selecting $q \in \test$ in \eqref{weekeqbih} we first obtain $\Delta^2 u = f$ in $D$ (in the sense of distributions). 

The latter result says that $\Delta u \in L^2(\Delta;D)$, thus by Lemma \ref{trace} one has $(\Delta u)_{|\partial D}   \in H^{-1/2}(\partial D)$ and $(\nabla  \Delta u \cdot n)_{|\partial D} \in H^{-3/2}(\partial D)$ (which is coherent with the assumptions
$g \in H^{-1/2}(\partial D)$ and $h \in H^{-3/2}(\partial D)$).

Take now $q \in H^2(D)$ and integrate by parts:
$$
\begin{array}{ll}
 \int_D \Delta u \, \Delta q \, dx  &=  - \int_D \nabla  \Delta u \cdot \nabla q \, dx +\langle \Delta u,\nabla q \cdot n\rangle_{-\frac12} \\ [4pt]
&= \int_D \Delta^2 u \, q \, dx - \langle \nabla  \Delta u \cdot n,q\rangle_{-\frac32} +\langle \Delta u,\nabla q \cdot n\rangle_{-\frac12} \, .
\end{array} 
$$
Taking into account that $\Delta^2 u = f$ in $D$, from  \eqref{weekeqbih} it follows
$$
-\langle \nabla  \Delta u \cdot n - h,q\rangle_{-\frac32} +\langle \Delta u - g,\nabla q \cdot n\rangle_{-\frac12}  \ \ \forall \ q \in H^2(D) \, .
$$ 
We must now select $q \in H^2(D)$ in a suitable way; precisely, it will be the solution $\rho \in H^2(D)$ of the Dirichlet boundary value problem $\Delta^2 \rho = 0$ in $D$ with $\rho_{|\partial D} = p_1$, $(\nabla \rho \cdot n)_{|\partial D} = p_2$, with arbitrary $p_1 \in H^{3/2}(\partial D)$ and $p_2 \in H^{1/2}(\partial D)$. This solution $\rho$ exists and is unique (see Gazzola, Grunau and Sweers \cite[Theorem 2.16]{GGS}). Choosing $p_2=0$ we obtain $(\nabla  \Delta u \cdot n)_{|\partial D} = h$ in $H^{-3/2}(\partial D)$; choosing $p_1=0$ it follows $(\Delta u)_{|\partial D}  = g$ in $H^{-1/2}(\partial D)$. 

In conclusion, 
in spite of the fact that the Neumann boundary value problem for the biharmonic operator does not satisfy the complementing condition, if  $f \in L^2(D)$, $g \in H^{-1/2}(\partial D)$ and $h \in H^{-3/2}(\partial D)$ we have proved that existence, uniqueness and continuous dependence on the data hold for the weak formulation \eqref{weekeqbih}, and we have also verified that, if the data satisfy the compatibuility condition \eqref{neccon}, the solution $u$ of \eqref{weekeqbih} is a (distributional) solution of \eqref{neubih}.

\begin{remark}\label{immerH2}
Looking at the proof just presented, we see that, under the assumption $\partial D \in C^4$, we have also proved the continuous immersion
$$
V_\sharp = L^2(\Delta;D) \cap {\cal H}^\bot \hookrightarrow H^2(D) \, .
$$
In fact, we have shown $V_\sharp = X$, with equivalence of the norms, and $X$ is a closed subspace of $H^2(D)$. Therefore, by the Rellich theorem the immersion $L^2(\Delta;D) \cap {\cal H}^\bot \hookrightarrow L^2(D)$ is compact; let us note again that for $L^2(\Delta;D)$ or   $L^2(\Delta;D) \cap {\cal H}$ this is not true  (see Valli \cite[Exercise 6.11]{AlbV}).
\end {remark}

\subsection{Reformulation as an elliptic system}

With the aim of devising a formulation that is more suitable for finite element numerical approximation, it can be interesting to rewrite the Neumann problem for the biharmonic operator as a system of two second order equations. In this way one can avoid the rather cumbersome need of higher order continuity of the discrete functions across the common boundaries of the elements.

We need to assume that the boundary data are more regular, precisely, we require $g \in H^{1/2} (\partial D)$ and $h \in H^{-1/2} (\partial D)$, and we start from the solution $u \in H^2(D)$ of problem \eqref{weekeqbih}. As a first step we set $\sigma = \Delta u$. From this we have $\Delta \sigma = f \in L^2(D)$ in $D$ and $\sigma_{|\partial D} = g \in H^{1/2} (\partial D)$ on $\partial D$, so that $\sigma \in H^1(D)$; we also have $\nabla \sigma \cdot n = h$ on $\partial D$, as $\nabla \sigma \in H(\text{div};D)$, thus its normal trace has a meaning in $H^{-1/2}(\partial D)$ (see, e.g., Girault and Raviart \cite[Theor. 2.5]{GirRav}). Then we extend $g$ inside $D$ by $\psi_g \in H^1(D)$, and finally we set $\lambda = \sigma - \psi_g$, which belongs to $H^1_0(D)$.

By integration by parts we obtain
$$
\begin{array}{ll}
0 &= \int_D \sigma \, \tau \, dx - \int_D \Delta u \, \tau \, dx \\ [4pt] &= \int_D \sigma \, \tau \, dx + \int_D \nabla u \cdot \nabla \tau \, dx - \int_{\partial D} \nabla u \cdot n \, \tau \, dS_x  \\[4pt]&= \int_D \lambda \, \tau \, dx + \int_D \psi_g \, \tau \, dx + \int_D \nabla u \cdot \nabla \tau \, dx \quad \forall \ \tau \in H^1_0(D)
\end{array}
$$
and
$$
\begin{array}{ll}
0 &= \int_D f \, v \, dx -\int_D \Delta \sigma \, v \, dx =  \int_D f \, v \, dx + \int_D \nabla \sigma \cdot \nabla v \, dx - \langle \nabla \sigma \cdot n,v\rangle_{-\frac12} \\[4pt]
&=  \int_D f \, v \, dx + \int_D \nabla \lambda \cdot \nabla v \, dx + \int_D \nabla \psi_g \cdot \nabla v \, dx - \langle h,v\rangle_{-\frac12} \quad \forall \ v \in H^1(D) \, .
\end{array}
$$
Thus $u \in H^1(D) \cap {\cal H}^\bot$ and $\lambda \in H^1_0(D)$ are a solution to the system
\begin{equation}\label{mixedpb}
\begin{array}{ll}
\int_D \lambda \, \tau \, dx + \int_D \nabla u \cdot \nabla \tau \, dx =  - \int_D \psi_g \, \tau \, dx  \quad \forall \ \tau \in H^1_0(D)\\ [6pt]
\int_D \nabla \lambda \cdot \nabla v \, dx \\ [4pt]\qquad = - \int_D f \, v \, dx - \int_D \nabla \psi_g \cdot \nabla v \, dx + \langle h,v\rangle_{-\frac12} \quad \forall \ v \in H^1(D) \cap {\cal H}^\bot\, .
\end{array}
\end{equation}
We want to show that uniqueness holds for this system. Assume that its right hand sides vanish; then taking $\tau = \lambda$ and $v = u$ we find $\int_D \nabla \lambda \cdot \nabla u \, dx = 0$, therefore $\int_D \lambda^2 \, dx = 0$ and $\lambda = 0$ in $D$. We are thus left with $\int_D \nabla u \cdot \nabla \tau \, dx = 0$ for all $\tau \in H^1_0(D)$, which says $\Delta u = 0$ in $D$. Hence $u \in {\cal H}$, and from $u \in {\cal H}^\bot$ we conclude $u = 0$.

Thus the solution $u$ of problem \eqref{weekeqbih}, together with $\lambda = \Delta u - \psi_g$, is a solution of problem \eqref{mixedpb}: since the solution of this last problem is unique, by solving \eqref{mixedpb} we find the solution $u$ of \eqref{weekeqbih}.

However, system \eqref{mixedpb} is not yet satisfactory from the point of view of numerical approximation, as the space $H^1(D) \cap {\cal H}^\bot$ has the orthogonality constraint, which is not easy to be handled with finite elements.

Thus we modify the previous procedure. Start as before from the solution $u \in H^2(D)$ of problem \eqref{weekeqbih}, and construct the harmonic function $\eta_u \in H^2(D)$ solution of 
$$
\left\{\begin{array}{ll}
\Delta \eta_u = 0 &\hbox{\rm in} \ D\\ 
\eta_{u|\partial D} = u_{|\partial D} &\hbox{\rm on} \ \partial D \, .
\end{array}
\right.
$$
Thus $s = u - \eta_u$ is another solution of the Neumann problem for the operator $\Delta^2$, and moreover $s_{|\partial D} = 0$ on $\partial D$. From now on we proceed as before:  we define $\sigma = \Delta s$, $\lambda = \sigma - \psi_g$ (so that, in particular, $(\nabla \sigma \cdot n)_{|\partial D} = h$ and $\lambda_{|\partial D} = 0$ on $\partial D$) and obtain the system 
$$
\begin{array}{ll}
\int_D \lambda \, \tau \, dx + \int_D \nabla s \cdot \nabla \tau \, dx =  - \int_D \psi_g \, \tau \, dx  \quad \forall \ \tau \in H^1_0(D)\\ [6pt]
\int_D \nabla \lambda \cdot \nabla v \, dx \\ [4pt]\qquad = - \int_D f \, v \, dx - \int_D \nabla \psi_g \cdot \nabla v \, dx + \langle h,v\rangle_{-\frac12} \quad \forall \ v \in H^1(D) \, .
\end{array}
$$
Since $s \in H^1_0(D)$, we are even led to conclude that $s$ and $\lambda \in H^1_0(D)$ are solutions of 
\begin{equation}\label{mixedpb2}
\begin{array}{ll}
\int_D \lambda \, \tau \, dx + \int_D \nabla s \cdot \nabla \tau \, dx =  - \int_D \psi_g \, \tau \, dx  \quad \forall \ \tau \in H^1_0(D)\\ [6pt]
\int_D \nabla \lambda \cdot \nabla v \, dx \\ [4pt]\qquad = - \int_D f \, v \, dx - \int_D \nabla \psi_g \cdot \nabla v \, dx  \quad \forall \ v \in H^1_0(D) \, .
\end{array}
\end{equation}
(which is a triangular system!).

The solution $(s,\lambda)$ of this problem is unique: taking $\tau = \lambda$ and $v=s$ it follows $\int_D \nabla s \cdot \nabla \lambda \, dx = 0$, then $\int_D \lambda^2 \, dx = 0$ and $\lambda = 0$ in $D$. Moreover, taking $\tau = s$ (that is now possible, as $s \in H^1_0(D)$) we find $\nabla s = 0$ in $D$ and, from the homogeneous boundary condition, $s = 0$ in $D$. 

As before, the conclusion is that, if we find the solution $(s,\lambda)$ of \eqref{mixedpb2}, we have also found a solution $s$ of the Neumann boundary value problem for the $\Delta^2$ operator. This solution $s$ is the only one that satisfies $s_{|\partial D} =0$ on $\partial D$ (but no longer we have $s \in {\cal H}^\bot$). In other word, we have selected a different solution of the Neumann problem.

If you are worried by the fact that the boundary datum $h$ does not appear in \eqref{mixedpb2}, consider that we still have available the compatibility condition \eqref{neccon}. A direct computation shows that for each $\eta \in H^2(D) \cap {\cal H}$ the solution $s$ satisfies
$$
\int_D f \, \eta \, dx = \int_D \eta \, \Delta^2 s  \, dx = \langle\nabla \Delta s \cdot n,\eta\rangle_{-\frac32} - \langle \Delta s, \nabla \eta \cdot n\rangle_{-\frac12} \, .
$$
Inserting this result in \eqref{neccon} and recalling that $(\Delta s)_{|\partial D} = g$ on $\partial D$ it follows
$$
\langle\nabla \Delta s \cdot n,\eta\rangle_{-\frac32} = \langle h,\eta\rangle_{-\frac32} \, .
$$
Taking $\xi \in H^{3/2}(\partial D)$ and finding the solution $\eta_\xi\in H^2(D) \cap {\cal H}$ of 
$$
\left\{\begin{array}{ll}
\Delta \eta_\xi = 0 &\hbox{\rm in} \ D \\ 
\eta_{\xi|\partial D} = \xi &\hbox{\rm on} \ \partial D \, ,
\end{array}
\right.
$$
we conclude that $\langle\nabla \Delta s \cdot n - h,\xi\rangle_{-\frac32} = 0$ for each $\xi \in H^{3/2}(\partial D)$, thus $\nabla \Delta s \cdot n = h$ on $\partial D$ (as elements of $H^{-3/2}(\partial D)$).

In conclusion, the variational formulation \eqref{mixedpb2} is the best suited for finite element numerical approximation: it consists in two homogeneous Dirichlet boundary value problems for the Laplace operator, to be solved in cascade.

\subsection{An additional result: well-posedness of an overdetermined Poisson problem}

The procedures we employed in the preceding sections are essentially the same that lead to the following basic existence theorem for an overdetermined Poisson problem. Let us mention that we were not previously aware of this result, but by a search on Google we have found a quite similar proof in 

\noindent ``https://mathoverflow.net/questions/316421/overdetermined-poisson-equation", 

\noindent where a simple note by Mateusz Kwa\'snicki was posted in 2018. 

\begin{theorem}\label{overth}
Assume that $D \subset \mathbb{R}^{n}$ is a bounded, connected, open set, with  a boundary $\partial D \in C^2$ and that $p \in L^2(D)$. The overdetermined Poisson boundary value problem
\begin{equation}\label{over}
\left\{\begin{array}{ll}
\Delta U = p&\hbox{\rm in} \ D\\ 
U_{|\partial D} = 0 &\hbox{\rm on} \ \partial D \\
(\nabla U \cdot n)_{|\partial D} = 0 &\hbox{\rm on} \ \partial D
\end{array}
\right.
\end{equation}
has a (unique) solution $U \in H^2_0(D)$ if and only if $p \in {\cal H}^\bot$.
\end{theorem}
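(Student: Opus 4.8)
The plan is to realize the solution of \eqref{over} as the image of $p$ under the inverse of the operator $T\colon H^2_0(D)\to L^2(D)$, $Tu:=\Delta u$, and to prove that $T$ is injective with closed range, that range being exactly ${\cal H}^\bot$. The necessity of $p\in{\cal H}^\bot$ is the easy half: if $U\in H^2_0(D)$ solves \eqref{over}, pick $u_k\in\test$ with $u_k\to U$ in $H^2(D)$; then for every $\eta\in{\cal H}$,
$$
\int_D p\,\eta\,dx=\lim_k\int_D\Delta u_k\,\eta\,dx=\lim_k\int_D u_k\,\Delta\eta\,dx=0 .
$$

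The first substantial step is the a priori estimate $\|u\|_{H^2(D)}\le C\,\|\Delta u\|_{L^2(D)}$ for all $u\in H^2_0(D)$. For $u\in\test$ two integrations by parts (with no boundary contributions, $u$ having compact support) give the classical identity $\int_D(\Delta u)^2\,dx=\sum_{i,j}\int_D(\partial_i\partial_j u)^2\,dx$; by density this extends to every $u\in H^2_0(D)$. Applying the Poincar\'e inequality to $u$ and then to each first derivative $\partial_i u$ — all lying in $H^1_0(D)$, $D$ being bounded — one controls $\|u\|_{L^2(D)}$ and $\|\nabla u\|_{L^2(D)}$ by $\|\Delta u\|_{L^2(D)}$, which yields the claim. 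In particular $T$ is injective (so uniqueness in \eqref{over} already follows) and its range $R:=T(H^2_0(D))$ is closed in $L^2(D)$.

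The second step identifies $R$. I claim $R^\bot={\cal H}$ in $L^2(D)$: if $g\in R^\bot$ then $\int_D\Delta u\,g\,dx=0$ for all $u\in H^2_0(D)$, and restricting to $u\in\test$ this says precisely that $\Delta g=0$ in the weak sense, i.e. $g\in{\cal H}$; conversely any $g\in{\cal H}$ is orthogonal to all of $R$ (approximate $u\in H^2_0(D)$ by elements of $\test$ and use $\int_D g\,\Delta u_k\,dx=0$). Since $R$ is closed, $R=(R^\bot)^\bot={\cal H}^\bot$. Hence, given $p\in{\cal H}^\bot=R$, there is a (unique) $U\in H^2_0(D)$ with $\Delta U=p$ in $D$; and because $U\in H^2_0(D)$ its traces $U_{|\partial D}\in H^{3/2}(\partial D)$ and $(\nabla U\cdot n)_{|\partial D}\in H^{1/2}(\partial D)$ both vanish (the assumption $\partial D\in C^2$ — in fact Lipschitz regularity would do — guaranteeing that these traces are well defined). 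Thus $U$ solves \eqref{over}, completing the proof.

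I do not foresee a genuine obstacle; the point deserving care is that the closed–range property must be extracted from the \emph{elementary} identity $\int_D(\Delta u)^2\,dx=\sum_{i,j}\int_D(\partial_i\partial_j u)^2\,dx$ on $H^2_0(D)$, rather than from elliptic regularity, so that the whole argument runs under the weak hypothesis $\partial D\in C^2$ (in contrast with Lemmas \ref{close} and \ref{V=X}, where $C^4$ boundaries were used). An equivalent packaging, closer to the procedures of the preceding sections, is a Riesz representation argument in the Hilbert space $\big(H^2_0(D),(u,v)\mapsto\int_D\Delta u\,\Delta v\,dx\big)$: solve $\int_D\Delta w\,\Delta v\,dx=\int_D p\,\Delta v\,dx$ for all $v\in H^2_0(D)$, which forces $\Delta w-p\in{\cal H}$, and then combine $\Delta w\in{\cal H}^\bot$ with the hypothesis $p\in{\cal H}^\bot$ to conclude $\Delta w=p$, so that $U:=w$ works.
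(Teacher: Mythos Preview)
Your proof is correct. The primary argument you present---showing that $T=\Delta\colon H^2_0(D)\to L^2(D)$ is injective with closed range and then identifying $R(T)^\bot={\cal H}$---is a genuinely different packaging from the paper's, which is precisely the ``alternative'' Riesz/Lax--Milgram route you sketch in your final paragraph: solve $\int_D\Delta W\,\Delta\mu\,dx=\int_D p\,\Delta\mu\,dx$ in $H^2_0(D)$, observe $\Delta W-p\in{\cal H}\cap{\cal H}^\bot$, and conclude $\Delta W=p$. A further difference lies in how the key estimate $\|u\|_{H^2}\le C\|\Delta u\|_{L^2}$ is obtained: the paper invokes elliptic regularity on $H^2(D)\cap H^1_0(D)$ (this is where the $C^2$ hypothesis on $\partial D$ enters), whereas you derive it on $H^2_0(D)$ via the elementary identity $\int_D(\Delta u)^2\,dx=\sum_{i,j}\int_D(\partial_i\partial_j u)^2\,dx$ combined with Poincar\'e. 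Your route is more self-contained and, as you correctly note, needs less boundary regularity for the core argument; the paper's version has the mild advantage of delivering the estimate on the larger space $H^2(D)\cap H^1_0(D)$, from which it also reads off uniqueness for \eqref{over} in that class.
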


\begin{proof}
We have already seen that $p = \Delta U$ belongs to ${\cal H}^\bot$ if $U \in H^2_0(D)$.

Vice versa, take $p \in {\cal H}^\bot$ and consider the variational problem:
\begin{equation}\label{lasteq}
\begin{array}{ll}
&\text{find } W \in H^2_0(D) \, : \\ [4pt]
&\qquad \int_D \Delta W \, \Delta \mu \, dx = \int_D p \, \Delta \mu \, dx  \\[4pt] 
&\text{for all} \ \mu \in H^2_0(D) \, .
\end{array}
\end{equation}
By the regularity results for elliptic problems we have $\|\mu\|_{H^2(D)} \le C \|\Delta \mu\|_{L^2(D)}$ for each $\mu \in H^2(D) \cap H^1_0(D)$. Therefore a first consequence is that uniqueness holds for problem \eqref{over}; moreover, the bilinear form at the left hand side of problem \eqref{lasteq} is coercive in $H^2(D)$, hence we have a unique solution $W \in H^2_0(D)$  by the Lax--Milgram theorem. We thus have $(\Delta W - p) \in {\cal H}$. On the other hand, from $W \in H^2_0(D)$ we have $\Delta W \in {\cal H}^\bot$, therefore $(\Delta W - p) \in {\cal H}^\bot$. In conclusion, $\Delta W = p$ in $D$, hence $W$ is a solution of problem \eqref{over}.
\end{proof}

In a similar way,  we also have
\begin{theorem}
Assume that $D \subset \mathbb{R}^{n}$ is a bounded, connected, open set, with  a boundary $\partial D \in C^4$ and that $p \in L^2(D)$. The overdetermined fourth order boundary value problem
\begin{equation}\label{over2}
\left\{\begin{array}{ll}
\Delta^2 V = p&\hbox{\rm in} \ D\\ 
V_{|\partial D} = 0 &\hbox{\rm on} \ \partial D \\
(\Delta V)_{|\partial D} = 0 &\hbox{\rm on} \ \partial D \\
(\nabla \Delta V \cdot n)_{|\partial D} = 0 &\hbox{\rm on} \ \partial D
\end{array}
\right.
\end{equation}
has a (unique) solution $V \in H^4(D) \cap H^1_0(D)$ if and only if $p \in {\cal H}^\bot$.
\end{theorem}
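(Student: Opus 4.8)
The plan is to reduce problem \eqref{over2} to the overdetermined Poisson problem \eqref{over} already solved in Theorem \ref{overth}, followed by one ordinary Dirichlet problem for the Laplace operator; both implications of the ``if and only if'' then drop out with little extra work.

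For necessity, I would take a solution $V \in H^4(D) \cap H^1_0(D)$ of \eqref{over2} and set $\sigma = \Delta V \in H^2(D)$. The last two boundary conditions say $\sigma_{|\partial D} = 0$ and $(\nabla \sigma \cdot n)_{|\partial D} = 0$, which (using the trace theory for $H^2(D)$, valid since $\partial D \in C^4$) is exactly the statement $\sigma \in H^2_0(D)$. Since $\Delta \sigma = \Delta^2 V = p$, the very computation used in the proof of Theorem \ref{overth} --- integrate by parts twice in $\int_D p \, \eta \, dx = \int_D \Delta \sigma \, \eta \, dx$ with $\eta \in {\cal H}$, the boundary terms vanishing because $\sigma \in H^2_0(D)$ --- shows $p \in {\cal H}^\bot$.

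For sufficiency, starting from $p \in {\cal H}^\bot$, I would first invoke Theorem \ref{overth} to obtain the unique $\sigma \in H^2_0(D)$ with $\Delta \sigma = p$ in $D$, and then solve the homogeneous Dirichlet problem $\Delta V = \sigma$ in $D$, $V_{|\partial D} = 0$. This has a unique solution $V \in H^1_0(D)$, and because $\sigma \in H^2(D)$ and $\partial D \in C^4$, elliptic regularity for the Dirichlet Laplacian yields $V \in H^4(D)$ with $\|V\|_{H^4(D)} \le C\|\sigma\|_{H^2(D)}$. Then $\Delta^2 V = \Delta \sigma = p$ in $D$, $V_{|\partial D} = 0$ by construction, and $(\Delta V)_{|\partial D} = \sigma_{|\partial D} = 0$, $(\nabla \Delta V \cdot n)_{|\partial D} = (\nabla \sigma \cdot n)_{|\partial D} = 0$ because $\sigma \in H^2_0(D)$; hence $V$ solves \eqref{over2}. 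For uniqueness, the difference $W$ of two solutions solves the homogeneous problem, so $\Delta W \in H^2(D)$ is harmonic and, having zero trace and zero normal derivative, lies in $H^2_0(D)$; the uniqueness part of Theorem \ref{overth} (or simply $\int_D |\nabla \Delta W|^2\,dx = 0$) forces $\Delta W = 0$, and then $W \in H^1_0(D)$ harmonic gives $W = 0$.

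I do not expect a serious obstacle here: the structure is the same as in Lemma \ref{V=X} and Theorem \ref{overth}. The only step that is not purely formal is the $H^4$ regularity bound $\|V\|_{H^4(D)} \le C\|\sigma\|_{H^2(D)}$ for the Dirichlet problem $\Delta V = \sigma$, and this is precisely where the hypothesis $\partial D \in C^4$ is consumed; together with the trace characterization of $H^2_0(D)$ it is the piece I would be most careful to cite correctly.
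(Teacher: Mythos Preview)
Your proposal is correct and follows essentially the same route as the paper: reduce to Theorem \ref{overth} for $\sigma=\Delta V\in H^2_0(D)$, then solve the Dirichlet problem $\Delta V=\sigma$ and invoke $C^4$ elliptic regularity to land in $H^4(D)\cap H^1_0(D)$. The only cosmetic difference is in the uniqueness step, where the paper integrates by parts directly to get $0=\int_D V\,\Delta^2 V\,dx=\int_D(\Delta V)^2\,dx$, while you factor through the uniqueness of Theorem \ref{overth}; both arguments are equivalent.
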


\begin{proof}
Since $\Delta V \in H^2_0(D)$, we have as before $p = \Delta(\Delta V) \in {\cal H}^\bot$.

Vice versa, for $p \in {\cal H}^\bot$ take the solution $U \in H^2_0(D)$ of problem \eqref{over}. Then consider the solution $V \in H^1_0(D)$ of the problem
$$
\left\{\begin{array}{ll}
\Delta V = U &\hbox{\rm in} \ D\\ 
V_{|\partial D} = 0 &\hbox{\rm on} \ \partial D \, .
\end{array}
\right.
$$
Since $\partial D \in C^4$ by the elliptic regularity results it follows $V \in H^4(D) \cap H^1_0(D)$, and it is readily shown that $V$ is a solution to \eqref{over2}.

Concerning uniqueness, setting $p=0$ in \eqref{over2} by integration by parts we have 
$$
0 = \int_D V \, \Delta^2 V \, dx = \int_D \Delta V \, \Delta V \, dx \, ,
$$
thus $\Delta V = 0$ in $D$ and from the boundary condition $V_{|\partial D} = 0$ on $\partial D$ it follows $V=0$ in $D$.
\end{proof}

\medskip

{\sl Note added in proof.}
After having completed this work, we got to know that the use of the space $X = \Delta[H^4(D) \cap H^2_0(D)]$ had been already proposed by Provenzano \cite{prov} for studying an eigenvalue problem for the biharmonic operator with the Neumann boundary conditions. It seems rather clear that a  suitable rephrasing of the results there obtained would have led to an existence and uniqueness theorem similar to the one presented here.

\medskip

{\bf Acknowledgement.} It is a pleasure to thank Filippo Gazzola, Pier Do\-me\-ni\-co Lamberti and Dirk Pauly  for some useful conversations about boundary value problems for the biharmonic operator.
I also acknowledge the support of INdAM-GNCS.

%%%%%%%%%%%%%%%%%%%%%%%%%%%%%%%%%%%%
\bibliographystyle{plain}
\bibliography{biharm}

\end{document}